\documentclass[preprint,1p]{elsarticle}

\usepackage{amsxtra}
\usepackage{amssymb}
\usepackage{amsmath,mathtools}
\usepackage{amsthm}
\usepackage{xcolor}

\newcommand{\orcid}[1]{\href{https://orcid.org/#1}{\textcolor[HTML]{A6CE39}{\aiOrcid}}}

\usepackage{hyperref}
\usepackage[capitalize]{cleveref}
\usepackage{quiver}
\tikzcdset{scale cd/.style={every label/.append style={scale=#1},cells={nodes={scale=#1}}}}

\DeclareMathOperator{\Act}{Act}
\DeclareMathOperator{\Aut}{Aut}
\DeclareMathOperator{\Ann}{Ann}

\renewcommand{\ker}{\text{Ker}\,}
\DeclareMathOperator{\id}{id}

\newcommand{\PostL}{\text{\bf PostLie}}
\newcommand{\PreL}{\text{\bf PreLie}}
\newcommand{\Skb}{\text{\bf Skb}}
\DeclareMathOperator{\Split}{Split}
\DeclareMathOperator{\Sym}{Sym}
\DeclareMathOperator{\Soc}{Soc}

\newcommand{\Z}{\mathbb{Z}}
\newcommand{\N}{\mathbb{N}}
\newcommand{\Set}{\mathbf{Set}}

\newcommand{\F}{\mathcal{F}}

\newcommand{\C}{\mathcal{C}}
\newcommand{\V}{\mathcal{V}}
\renewcommand{\to}[1][\,]{\xrightarrow{#1}} 

\newcommand{\plus}{+}
\newcommand{\mybinom}[2]{\resizebox{!}{12pt}{$\displaystyle \binom{#1}{#2}$}}

\numberwithin{equation}{section}

\theoremstyle{definition}
\newtheorem{ex}{Example}[section]
\newtheorem{exs}[ex]{Examples}

\newtheorem{dfn}[ex]{Definition}

\newtheorem{theor}[ex]{Theorem}

\newtheorem{rem}[ex]{Remark}
\newtheorem{question}[ex]{Question}

\theoremstyle{plain}
\newtheorem{lemma}[ex]{Lemma}
\newtheorem{prop}[ex]{Proposition}

\begin{document}

\let\today\relax 

\begin{frontmatter}

\title{\Large Action accessibility in the variety of skew braces\tnoteref{mytitlenote}}
\tnotetext[mytitlenote]{This work was partially supported by the Dipartimento di Matematica e Fisica ``Ennio De Giorgi'' - Università del Salento. The authors are members of GNSAGA (INdAM).	
}

\author[unile]{Andrea~ALBANO}
\ead{andrea.albano@unisalento.it (ORCID: 0009-0007-5397-5505))}

\author[unile]{Paola~STEFANELLI}
	\ead{paola.stefanelli@unisalento.it (ORCID: 0000-0003-3899-3151)}

\address[unile]{Department of Mathematics and Physics “Ennio De Giorgi”, University of Salento\\
Via Provinciale Lecce-Arnesano, 
73100 Lecce (Italy)}

\begin{abstract}
    In this paper, we answer negatively to a question posed in the context of the Oberwolfach Mini-Workshop ``The Yang--Baxter Equation and Representations of Braid Groups'' \cite{CoPlRoVe25} regarding the existence of split extensions classifiers in the category of skew braces.
    To this end, we show that the variety of skew braces is not action accessible by investigating the categorical notion of centraliser in the sense of Huq \cite{Huq68}.
    In light of their intrinsic relationship with skew braces, an analogous result is proven for the variety of post-Lie algebras over a fixed field.
\end{abstract}

\begin{keyword}
		skew brace 
        \sep semi-abelian category 
        \sep  action accessibility
        \sep centraliser 
        \sep Yang--Baxter equation 
		\MSC[2020] 
        81R50 \sep 
        18E13 \sep
        16T25
	\end{keyword}

\end{frontmatter}

\section*{Introduction}

Skew brace theory finds its roots in early foundational works \cite{EtSchSol99,LuYanZhu00,Sol00} related to the investigation of non-degenerate solutions to the set-theoretic Yang--Baxter equation \cite{Drinfeld92}.
These structures first arose in a commutative setting in \cite{Rump07} as a generalization of Jacobson radical rings and were later extended in \cite{GuVe17}.
Following the formulation given in \cite{CedFerOkn14, GuVe17}, a skew (left) brace is a set $B$ equipped with two group operations $+$ and $\circ$ satisfying the following identity
\begin{align*}
        \forall\,a,b,c\in B\qquad a \circ (b+c) = a \circ b - a + a \circ c. 
\end{align*}

Despite its evidently group theoretical flavour, the study of such structures has deeply benefited from categorical and universal algebraic considerations.
For instance, in \cite{BonJed23} the authors employed the Freese--McKenzie commutator theory to obtain a natural notion of (central) nilpotency that turned out to be stronger than previous definitions already present in the literature, see \cite{Rump07,Sm18}. 
In a similar vein, protomodular aspects of the category $\Skb$ of skew braces were investigated by the authors of \cite{BouFacPom23} who proved that the so-called \emph{Smith is Huq} condition holds in $\Skb$ and provided a set of generators for the commutator of two ideals in a skew brace.
These results have been the basis for further studies as found, for example, in 
\cite{Bal25}, where a systematic investigation of central nilpotency was carried out, and in
\cite{Bal24solv} where a novel definition of solvability for skew braces was formulated.\\
Hopf-theoretic methods have also had an important role in enriching the theory of skew braces, finding their natural setting in Hopf braces, introduced in \cite{AngGalVen17} and
for which similar considerations as above hold,
see \cite{AgoChi25,GranScia26} and references therein.
A particular example of this interaction, inspired by the notion of a module over a Hopf brace \cite{Zhu22}, is the representation theory of skew braces. 
The latter was introduced in \cite{LetVen24} and has received attention especially in connection with the representation theory of groups, for example see \cite{KozTsang25,RatUd26x}.

\smallskip

The main motivation that drives the current work is the necessity of investigating the measure in which the representation theory of skew braces can be put in a categorical-algebraic framework.
As one would expect, 
a representation of a skew brace $B$ should be equivalent, if possible, to the datum of a natural skew brace of linear automorphisms $S(V) \subseteq \text{GL}_k(V)$ acting naturally on a $k$-vector space $V$ through a skew brace homomorphism $B \to S(V)$.
Here, naturality refers to the fact that there should exist a bijective correspondence between morphisms $B \to S(V)$ as above and split short exact sequences of skew braces 
$\begin{tikzcd}[ampersand replacement=\&,column sep=small]
	0 \& V \& {V \rtimes B} \& B \& 0
	\arrow[from=1-1, to=1-2]
	\arrow[tail, from=1-2, to=1-3]
	\arrow[shift left, two heads, from=1-3, to=1-4]
	\arrow[shift left, from=1-4, to=1-3]
	\arrow[from=1-4, to=1-5]
\end{tikzcd}$.
In light of the semi-abelianity of $\Skb$, these considerations lead us to address the question of whether \emph{action representability} holds in this category, in the sense of \cite{BorJanKel05}.
It is indeed through this lens that we interpret a question posed during the “The Yang–Baxter Equation and Representations of Braid Groups” \cite[p.~2635]{CoPlRoVe25}, which asked 
\begin{center}
``Does there exist an internal Aut-object $[X]$ in the category of skew braces that classifies split extensions (or actions) via a universal property?'', 
\end{center}
where $[X]$ is $S(V)$ as above. 
We respond negatively to this question precisely by verifying that $\Skb$ is not action accessible, thus not action representable, showing in this way that the above heuristic for skew brace representations is not valid.

The paper is divided into three sections.
The first section contains a brief introduction to semi-abelianity and its characterisation for varieties in the sense of Universal Algebra, along with a concise overview on the relationship between action representability and action accessibility.
The second section collects basic facts about skew braces that are needed to prove our main result \cref{theor:skb_not_actacc}.
The third and last section can be read independently from the second and contains an analogous result for the category of post-Lie algebras. 
The latter were introduced in \cite{Val07} and have found fruitful applications in numerical integration~\cite{CuEbMu17} as well as in the study of the Yang--Baxter equation, thanks to the fact that they are the infinitesimal objects associated with (smooth) skew braces, in the sense of \cite{BaiGuoLiShengTang24,Trap24x}.

\section{Semi-abelian varieties}
\label{sec:semiabelian}

In the present section, we state the definition of a semi-abelian variety in the sense of Universal Algebra following its characterisation given in \cite[Theorem 1.1]{BoJa03}.
Moreover, even though our interest lies in this particular context, we set up the language and notions that are necessary for the rest of the paper, providing appropriate statements for general semi-abelian categories whenever possible.
For a comprehensive treatment, we refer the reader to 
\cite{JanMarTho02}.

\smallskip

\begin{dfn}\label[dfn]{def:semiabelian-2}
    A pointed category $\C$ with finite products and finite coproducts is \emph{semi-abelian} if $\C$ is Barr-exact and Bourn-protomodular.
\end{dfn}

\smallskip

A category $\C$ is Barr-exact if it is regular and all its equivalence relations arise from a kernel pair.
Moreover, as shown in \cite{JanMarTho02}, under the condition that a Barr-exact category $\C$ has a zero object, Bourn-protomodularity is equivalent to the validity of the so-called split short five lemma.
Moreover, if $\C$ further admits all finite coproducts (sums), then there is an equivalence between the split short five lemma and the following property: for all split short exact sequences
\[\begin{tikzcd}[ampersand replacement=\&]
	X \& E \& A
	\arrow["k", tail, from=1-1, to=1-2]
	\arrow["\varepsilon", shift left, two heads, from=1-2, to=1-3]
	\arrow["s", shift left, from=1-3, to=1-2]
\end{tikzcd}\]
the sum $k+s: X + A \to E$ is an epimorphism.
Exactness of the above sequence refers to the fact that $k = \ker\varepsilon$ and $\varepsilon:E \to A$ is a regular epimorphism, i.e. the coequalizer of a pair of parallel morphisms.

\smallskip

When dealing with a variety $\V$ in the sense of Universal Algebra, to verify that $\V$ is semi-abelian it is enough to prove that $\V$ admits a sufficient number of \emph{terms} satisfying certain properties.
In general, consider a set $\F$ of basic operation symbols for some universal algebra, each with its corresponding arity.
Let $X$ be a set of \emph{variables} that can be assumed to be disjoint from $\F$.

The set of \emph{terms of type $\F$ over $X$} is the smallest set $T(X)$ containing $X$ as well as the constants in $\F$ and such that, if $\alpha_1, \ldots, \alpha_n \in T(X)$ and $\omega \in \F$ is an $n$-ary function symbol, then the string $\omega(\alpha_1, \ldots, \alpha_n)$ is in $T(X)$.
For each term $\alpha \in T(X)$, it is possible to assign an arity $n\in \N_0$, where $n$ is the number of variables that occur explicitly in $\alpha$.
For example, if $\F = \{\plus\}$ then $x$, $x \plus y$ and $x \plus (x \plus y)$ are terms of type $\F$ over $X = \{x,y\}$.


\begin{dfn}[cf. Theorem 1.1 in \cite{BoJa03}]
\label[dfn]{prop:semiabelian_variety}    
A variety $\V$ is \emph{semi-abelian} if and only if it admits a unique constant $0$, a set of binary terms $t_1, \ldots, t_n$ for some $n \in \mathbb{N}$, together with an $(n+1)$-ary term $t$ such that the 
    identities
    \begin{align*}
        t_i(x,x) = 0
        \qquad\text{and}\qquad
        t\left(x, t_1(x,y),\, \ldots, t_n(x,y)\right) = y \,,
    \end{align*}
    hold in $V$, for all $i = 1, \ldots, n$.
\end{dfn}

\begin{ex}
    Many classical examples of varieties such as groups and Lie algebras form semi-abelian varieties.
    Reflecting our interest, the variety $\Skb$ of skew braces is also semi-abelian.
    This fact was already noted in \cite{FacPom24} and can be directly verified through \cref{prop:semiabelian_variety} by considering the terms 
    $$
    t_1(x,y) := -x \plus y
    \qquad\text{and}\qquad
    t(x,y) := x+y
    $$ 
    in $\Skb$ together with its unique constant $0$.
\end{ex}

\subsection{Split extensions and action accessibility}

Let $\C$ be a semi-abelian category and let $A,X$ be objects of $\C$.
In \cite{BorJanKel05} the authors introduced a natural notion of an \emph{internal action} of $A$ on $X$ yielding
a functor $\Act(\boldsymbol{-},X):\C^{\text{op}} \to \Set$ 
mapping an object $A$ to the set $\Act(A,X)$ of $\C$-actions of $A$ on $X$.
An important feature of internal actions is that they are naturally classified by split extensions.

\begin{dfn}
    Let $\C$ be a semi-abelian category and let $A,X$ be objects of~$\C$.
    A \emph{split extension of $A$ by $X$} is a quadruple $(E,p,s,\mu)$ in $\C$ presented by a diagram of the form
    \[
    \Sigma:
    \begin{tikzcd}[ampersand replacement=\&]
	X \& E \& A
	\arrow["\mu", from=1-1, to=1-2]
	\arrow["p", shift left, from=1-2, to=1-3]
	\arrow["s", shift left, from=1-3, to=1-2]
    \end{tikzcd}\]
    with $\mu = \ker(p)$ and $ps = \id_A$.
    In other words, a split extension is a split short exact sequence $X\to E\to A$ in $\C$
    together with a fixed section $s:A \to E$.
\end{dfn}

A morphism $(E,p,s,\mu) \to (E',p',s',\mu')$ between two  
such diagrams is a commutative diagram of the form
\[\begin{tikzcd}[ampersand replacement=\&]
	X \& E \& A \\
	X \& {E'} \& A
	\arrow["\mu", from=1-1, to=1-2]
	\arrow[shift left, no head, from=1-1, to=2-1]
	\arrow["p", shift left, from=1-2, to=1-3]
	\arrow["f"', from=1-2, to=2-2]
	\arrow["s", shift left, from=1-3, to=1-2]
	\arrow[shift left, no head, from=1-3, to=2-3]
	\arrow[shift left, no head, from=2-1, to=1-1]
	\arrow["{\mu'}", from=2-1, to=2-2]
	\arrow["{p'}", shift left, from=2-2, to=2-3]
	\arrow[shift left, no head, from=2-3, to=1-3]
	\arrow["{s'}", shift left, from=2-3, to=2-2]
\end{tikzcd}\]
where $f:E \to E'$ is necessarily an isomorphism, thanks to the split short five lemma.
The set of isomorphism classes of split extensions of $A$ by~$X$ constitutes a category $\Split(A,X)$.
Moreover, if we fix an object $X \in \C$, then the above definitions yield a (contravariant) functor $\Split(\boldsymbol{-},X):\C^{\text{op}} \to \Set$
such that, for all morphisms $f:A'\to A$, the function $\Split(f,X)$ maps (the equivalence class of) a split extension $(E,p,s,\mu)$ to the (equivalence class of the) split extension $(E',p',s',\mu')$ defined by requiring that
\[\begin{tikzcd}[ampersand replacement=\&]
	{E'} \& {A'} \\
	E \& A
	\arrow["{p'}", from=1-1, to=1-2]
	\arrow["g", from=1-1, to=2-1]
	\arrow["f", from=1-2, to=2-2]
	\arrow["p", from=2-1, to=2-2]
\end{tikzcd}\]
displays a pull-back and that $s':A' \to E'$, $\mu':X\to E'$ are the unique morphisms such that $p'\mu' = 0$, $\mu = g\mu'$, $p's' = \id_{A'}$ and $gs' = sf$ hold. 
The existence and uniqueness of $s',\mu'$ follows by the fact that the above square is a pull-back. 

\smallskip

Let us now turn our attention to 
\cite[Definition 3.5]{BorJanKel05}.

\begin{dfn}\label[dfn]{def:action_rep_cat}
    A semi-abelian category $\C$ has \emph{representable internal actions} if, for all objects $X\in \C$, there exists an object $T(X)\in\C$  and a natural isomorphism
    \begin{align}\label{eq:action_rep}
        \eta_X:\Split(\boldsymbol{-}\,,X) \xrightarrow{\sim} \C(\boldsymbol{-}\,,T(X)) \,.
    \end{align}
\end{dfn}


A semi-abelian category that has representable internal actions is said to be \emph{action representable}.
If $X$ is an object of such a category, then the object $T(X)$ is called the \emph{actor} of $X$ while a morphism $B \to T(X)$ in the image $\eta_X(B)$ takes the name of \emph{acting morphism}, for all objects $B$.

Action representable categories belong to a wider class of “well-behaved” categories satisfying the so-called notion of \emph{action accessibility}, as defined in \cite{BoJa09} and which we state in what follows.

Before doing this, given an object $X$ in a semiabelian category $\C$, let us denote with $\Split(X)$ the category of (isomorphism classes of) split extensions in $\C$ with kernel $X$ (in the sense of \cref{def:action_rep_cat}). 
Recall that an object $\Sigma$ in $\Split(X)$ is called \emph{faithful} if all objects in $\Split(X)$ admit at most one morphism into $\Sigma$.
Moreover, an object in $\Split(X)$ is \emph{accessible} if it admits a morphism into a faithful object of $\Split(X)$, i.e. into a $\Sigma$ as above.
For more details we refer the reader to \cite{BoJa09}.

\begin{dfn}
    A a semi-abelian category is called \emph{action accessible} if every object of $\Split(X)$ is accessible, for all $X \in \C$.
\end{dfn}

\begin{rem}\label[rem]{rem:implication}
    A semi-abelian, action representable category $\C$ is action accessible as a consequence of 
    \cite[Theorem 6.3]{BorJanKel05}.
    Indeed, in its terms, given an object $X \in \C$ we obtain a natural faithful object in $\Split(X)$ as the split extension 
    \[\begin{tikzcd}
	X & {X \rtimes T(X)} & {T(X)}
	\arrow["\mu", tail, from=1-1, to=1-2]
	\arrow["p", two heads, from=1-2, to=1-3]
    \end{tikzcd}\]
    where $T(X)$ is the actor of $X$ while $\mu$, $p$ and the section $T(X) \to X \rtimes T(X)$ are appropriate canonical morphisms.
\end{rem}

\smallskip

Action accessibility is a property 
inherited by the Birkhoff subcategories of an action accessible category, as shown by \cite[Proposition 2.3]{BoJa09}. 
In particular, if we restrict the interest in varieties in the sense of Universal Algebra, then we can state the aforementioned result in the following manner.

\begin{theor}\label[theor]{lemma:sub_action}
    If $\V$ is a semi-abelian and action accessible variety, then all subvarieties of $\V$ are action accessible.
\end{theor}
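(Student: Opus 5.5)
The plan is to reduce the statement to \cite[Proposition 2.3]{BoJa09}, which asserts that action accessibility passes to Birkhoff subcategories (full subcategories closed under subobjects and regular quotients) of an action accessible semi-abelian category. It therefore suffices to check two things: a subvariety $\mathcal{W}$ of $\V$ is a Birkhoff subcategory of $\V$, and $\mathcal{W}$ is itself semi-abelian, so that the notion of action accessibility makes sense for it.

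For the first point I will use Birkhoff's HSP theorem. A subvariety $\mathcal{W}\subseteq\V$ is the class of $\V$-algebras satisfying an additional set of identities, so it is closed under subalgebras, homomorphic images and products. In a variety the regular epimorphisms are exactly the surjective homomorphisms, and the monomorphisms are the injective ones. Hence $\mathcal{W}$ is a full replete subcategory of $\V$ closed under subobjects and regular quotients, i.e.\ a Birkhoff subcategory. It is also reflective, the reflector sending an algebra to its quotient by the verbal congruence determined by the extra identities.

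For the second point I will use \cref{prop:semiabelian_variety}. The constant $0$ and the terms $t_1,\dots,t_n,t$ witnessing semi-abelianity of $\V$ are terms in the same signature. The identities $t_i(x,x)=0$ and $t(x,t_1(x,y),\dots,t_n(x,y))=y$ hold in every $\V$-algebra, hence in every $\mathcal{W}$-algebra. Moreover $0$ is still the unique constant of $\mathcal{W}$ (every constant term is equal to $0$ already in $\V$), so $\mathcal{W}$ is a semi-abelian variety.

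The step needing the most care is confirming that the split extensions and faithful objects used in $\Split(X)$ computed inside $\mathcal{W}$ agree with those computed in $\V$, in the way \cite{BoJa09} requires. Concretely, kernels, pullbacks and sections in $\mathcal{W}$ must be the restrictions of those in $\V$. This holds because $\mathcal{W}$ is closed under limits (products and subalgebras) and its inclusion into $\V$ is full. With that verified, \cite[Proposition 2.3]{BoJa09} applies directly and yields that $\mathcal{W}$ is action accessible.
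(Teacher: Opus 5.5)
Your proposal is correct and takes the paper's route. The paper gives no separate argument: it cites Proposition 2.3 of \cite{BoJa09} (action accessibility passes to Birkhoff subcategories) and notes that subvarieties are exactly such subcategories. Your checks, namely Birkhoff closure via HSP and semi-abelianity of the subvariety via the inherited terms of \cref{prop:semiabelian_variety}, fill in details the paper leaves implicit.
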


\section{Action accessibility of skew braces}

The aim of this section is to show that the variety $\Skb$ of skew braces is not action accessible and therefore not action representable.
We will proceed by first stating some basic properties of skew braces and then by recalling the notion of \emph{centraliser} of a subobject in a semi-abelian category, essentially due to \cite{Huq68}.
Our main argument relies on \cite[Corollary 2.6]{CiMan12} and will be laid out by providing an example of a skew brace admitting a proper ideal with a non-normal centraliser.

\subsection{Basics on skew braces}

This brief subsection introduces fundamental notions and fixes notation related to skew braces. 
We refer the reader to \cite{CeSmVe19,GuVe17} for further details.

\smallskip

\begin{dfn}
    A skew (left) brace is a set $B$ equipped with two group operations $+$ and $\circ$ satisfying the following identity:
    \begin{align}\label{eq:skewbrace-identity}
        a \circ (b+c) = a \circ b - a + a \circ c \,,
    \end{align}
    for all $a,b,c \in B$.
\end{dfn}

It is a routine computation to verify that the identity of the multiplicative group $(B,\circ)$ coincides with the identity $0$ of the additive group $(B,+)$. As a consequence, the equality \eqref{eq:skewbrace-identity} can also be written as
\begin{align}\label{eq:skewbrace-identity-2}
    a \circ (b+c) = a\circ b + a\circ(a^-+c),
\end{align}
for all  $a,b,c\in B$, where by $a^-$ we denote the inverse of $a$ with respect to the operation $\circ$, for all $a\in B$.
\smallskip

Note that, if $\left(B,+\right)$ is a group, then $\left(B,+, +\right)$ and $\left(B,+, +^{op}\right)$ are skew braces that are called the \emph{trivial} and the \emph{almost trivial skew brace} on the group $\left(B,+\right)$, respectively. 
The first proper examples can be found  in \cite{GuVe17} and \cite{SmVe18}. 
Moreover, for explicit classifications of skew braces of small orders, the reader is referred to \cite{BaNeYa20}, while a wide variety of techniques for structural constructions has been extensively developed throughout the literature.
        
\medskip

Let $(B,+,\circ)$ be a skew brace and, for all $a\in B$, define a map $\lambda_a:B \to B$ by setting $\lambda_a(b) = -a + a \circ b$, for all $b \in B$. The importance of such maps lies in the following basic properties listed below.

\begin{prop}\label{prop:lambda}
    Let $(B,+,\circ)$ be a skew brace. Then the following hold:
    \begin{itemize}
        \item[{\rm (1)}]  $\lambda_a \in \Aut(B,+)$, for all $a\in B$;
        \item[{\rm (2)}] the map $\lambda:(B,\circ) \to \Aut(B,+),\, a \mapsto \lambda_a$ is a group homomorphism.
    \end{itemize}
\end{prop}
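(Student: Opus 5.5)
Both claims will follow from the defining identity \eqref{eq:skewbrace-identity}, after first checking that the identities of $(B,+)$ and $(B,\circ)$ coincide; the paper takes this as routine. For completeness: write $1$ for the identity of $(B,\circ)$ and substitute $a=1$, $b=c=0$ into \eqref{eq:skewbrace-identity}. This gives $1\circ 0 = 1\circ 0 - 1 + 1\circ 0$, that is, $0 = -1+0$, so $1=0$. I will use this freely, together with the rewritten identity \eqref{eq:skewbrace-identity-2}.

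For (1), fix $a\in B$. Additivity of $\lambda_a$ comes straight from \eqref{eq:skewbrace-identity}:
\[
\lambda_a(b+c) = -a + a\circ(b+c) = -a + a\circ b - a + a\circ c = \lambda_a(b)+\lambda_a(c).
\]
Hence $\lambda_a$ is an endomorphism of $(B,+)$. For bijectivity, I will write down an explicit inverse, namely $b\mapsto a^-\circ(a+b)$. Indeed, $\lambda_a(a^-\circ(a+b)) = -a + a+b = b$. In the other direction, $a^-\circ(a + (-a + a\circ b)) = a^-\circ a\circ b = b$. So $\lambda_a\in\Aut(B,+)$.

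For (2), I need $\lambda_{a\circ b} = \lambda_a\lambda_b$. For $c\in B$, I will expand $\lambda_a(\lambda_b(c)) = \lambda_a(-b+b\circ c)$ using additivity, which gives
\[
\lambda_a(-b) + \lambda_a(b\circ c) = -\lambda_a(b) + \lambda_a(b\circ c).
\]
Unfolding each term, this equals
\[
-(-a+a\circ b) + (-a + a\circ b\circ c) = -a\circ b + a - a + a\circ b\circ c = -a\circ b + (a\circ b)\circ c,
\]
which is exactly $\lambda_{a\circ b}(c)$. Since $\lambda$ is a map between groups that preserves products, it is a group homomorphism; it automatically sends $0$ to $\id$ and inverses to inverses.

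There is no real obstacle here. The only step needing care is keeping the order of terms in the non-commutative group $(B,+)$ correct when negating $\lambda_a(b)$: one uses $-(x+y) = -y - x$ at that point. Similarly, the inverse map in (1) must be written as $a^-\circ(a+b)$, with the addition placed on the correct side.
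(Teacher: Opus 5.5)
Your proof is correct and complete: the identification $1=0$, the additivity of $\lambda_a$ from \eqref{eq:skewbrace-identity}, the explicit inverse $b\mapsto a^-\circ(a+b)$, and the computation $\lambda_a\lambda_b=\lambda_{a\circ b}$ all check out, including the order reversal $-\lambda_a(b)=-(a\circ b)+a$. The paper states this proposition without proof as a standard fact (cf.\ \cite{GuVe17}), and your argument is the usual direct verification.
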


Since the class of all skew braces form a variety, usual notions from Universal Algebra apply in this context.
For instance, a sub-skewbrace of a skew brace $(B,+,\circ)$ is an additive subgroup $I\leq (B,+)$ which is closed under the $\circ$ operation. 

\smallskip

\begin{dfn}
An additive subgroup~$I$ of a skew brace~$(B,+,\circ)$ is said to~be:
\begin{itemize}
    \item[{\rm(1)}] a \emph{left ideal} if $I$ is $\lambda$-invariant, i.e. such that $\lambda_a(I) \subseteq I$, for all $a\in B$;
    \item[{\rm(2)}] an \emph{ideal} of $B$ if $I$ is a left ideal such that $(I,+)$ is a normal subgroup of $(B,+)$ and $(I,\circ)$ is a normal subgroup of $(B,\circ)$.
    
\end{itemize}
\end{dfn}

\smallskip

Consequently, if $I$ is an ideal of $B$ then there is a well-defined map
\begin{align*}
    \lambda^I: (B,\circ) \to \Aut(I,+)
\end{align*}
defined by restricting $\lambda_b$ on $I$, i.e., by setting $\lambda^I(b):= (\lambda_b)_{\mid_{I}}$, for all $b\in B$.
One can easily check that the equivalence
\begin{align*}
    b\in \ker\lambda^I\ \iff\  \forall\ x\in I\quad b \circ x = b + x\,
\end{align*}
is satisfied.

\medskip

As it is clear from the first definition of this section, it is possible to develop an analogous theory for skew \emph{right braces} in which an identity dual to \eqref{eq:skewbrace-identity} holds.
In particular, a natural class of skew braces that arises through this elementary consideration is that of \emph{two-sided} skew braces.

\begin{dfn}
    A skew brace $(B,+,\circ)$ is \emph{two-sided} if the identity
    \begin{align}\label{eq:twosided}
        (a + b) \circ c = a \circ c - c + b \circ c 
    \end{align}
    holds, for all $a,b,c \in B$.
\end{dfn}

Note that if $R$ is a Jacobson radical ring, then the structure $(R,+,\circ)$ is a two-sided brace, where $\circ$ is the classical \emph{adjoint operation} defined by $a\circ b:= ab - a -b$, for all $a,b\in R$. 
Two-sided skew braces and their structure were investigated in different works such as \cite{Nas19,Trap23,Ts26} and have often proven to be an optimal ground to address important brace-theoretic questions.
Moreover, in \cite{MazRybSte25} the authors introduced an obstruction for two-sidedness in the wider class of dual weak braces and linked it to the study of solutions to the set-theoretic Yang--Baxter equation.

\medskip
Let $(B,+,\circ)$ be a two-sided skew brace.
For all $a \in B$, let $\sigma_a:B \to B$ the map defined by setting 
\begin{align*}
    \sigma_a(b):= b \circ a - a \,,
\end{align*}
for all $b \in B$. 
It is straightforward to verify that $\sigma_a \in \Aut(B,+)$ as well as the fact that $\sigma:(B,\circ) \to \Aut(B,+)\,, a \mapsto \sigma_a$
is a group anti-homomorphism.

\begin{rem}\label[rem]{rem:sigma-ideal}
Let $(B,+,\circ)$ be a skew brace and consider an additively and multiplicatively normal subgroup of $B$.
Then $I$ is an ideal of $B$ if and only if it is $\sigma$-invariant in the sense that $\sigma_a(I) \subseteq I$, for all $a \in B$.
This follows from the fact that 
\begin{align*}
    \sigma_a(x) = a + \lambda_a(a^-\circ x\circ a) -  a\in I,
    \,,
\end{align*}
for all $a \in B$ and $x \in I$.
\end{rem}

\smallskip

We finish this subsection by recording the following identities for later use.
\begin{prop}
\label[prop]{lemma:sigma+circ}
    Let $(B,+,\circ)$ be a skew brace. Then the following identity
    \begin{align}\label{eq:lambda_sum}
        a + b = a \circ \lambda_a^{-1}(b) 
    \end{align}
    holds, for all $a,b \in B$.  
    Moreover, if $B$ is two-sided, then the identity
    \begin{align}\label{eq:lambda_sum2}
    a+b = \sigma_b^{-1}(a) \circ b
    \end{align}
    holds, for all $a,b\in B$.
\end{prop}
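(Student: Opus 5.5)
We verify the two identities directly from the definitions of $\lambda$ and $\sigma$, using only that these maps are additive automorphisms (\cref{prop:lambda}, and the remark preceding \cref{rem:sigma-ideal} for $\sigma_b$ in the two-sided case).

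For \eqref{eq:lambda_sum}, let $a,b \in B$ and put $c := \lambda_a^{-1}(b)$, which exists since $\lambda_a$ is bijective. By definition of $\lambda_a$ we have $b = \lambda_a(c) = -a + a\circ c$. Adding $a$ on the left gives $a + b = a \circ c = a \circ \lambda_a^{-1}(b)$. This is exactly \eqref{eq:lambda_sum}; the skew brace identity \eqref{eq:skewbrace-identity} is not even needed beyond knowing that $\lambda_a$ is invertible.

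For \eqref{eq:lambda_sum2}, assume $B$ is two-sided, so that $\sigma_b \in \Aut(B,+)$ for every $b\in B$. Let $a,b\in B$ and put $d := \sigma_b^{-1}(a)$. By definition of $\sigma_b$ we have $a = \sigma_b(d) = d\circ b - b$. Adding $b$ on the right gives $a + b = d \circ b = \sigma_b^{-1}(a)\circ b$, which is \eqref{eq:lambda_sum2}.

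The proof has no real obstacle. The one point that needs care is the bijectivity of $\sigma_b$, which the paper states after defining $\sigma$. If one wants to check it independently, note that $\sigma_b$ is the composite of right $\circ$-multiplication by $b$ with right addition of $-b$; both are bijections of $B$, so $\sigma_b$ is bijective. Additivity of $\sigma_b$ follows from \eqref{eq:twosided}, but the argument above uses only invertibility.
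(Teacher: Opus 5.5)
Your proof is correct. The paper records these identities without proof, and your direct unwinding of $\lambda_a(c) = -a + a\circ c$ and $\sigma_b(d) = d\circ b - b$ is exactly the routine verification it leaves implicit. You are also right that only bijectivity of $\lambda_a$ and $\sigma_b$ is used, so the second identity holds in any skew brace (reading $\sigma_b^{-1}$ as the inverse bijection); two-sidedness is needed only to make $\sigma_b$ additive.
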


\smallskip

\subsection{Centralisers and action accessibility}

\begin{dfn}\label[dfn]{def:cooperate}
    Let $\C$ be a pointed category with binary products and let $f: X \to A$, $g: Y \to A$ be two morphisms in $\C$ sharing the same codomain.
    We say that $f$ and $g$ \emph{cooperate} in $A$ if there exists a morphism $\psi:X \times Y \to A$ making the following diagram commute:
    \[\begin{tikzcd}[ampersand replacement=\&,cramped]
    	X \& {X \times Y} \& Y \\
    	\& A
    	\arrow["{\{\id_X,\,0\}}", from=1-1, to=1-2]
    	\arrow["f"', from=1-1, to=2-2]
    	\arrow["\psi", dashed, from=1-2, to=2-2]
    	\arrow["{\{0,\id_Y\}}"', from=1-3, to=1-2]
    	\arrow["g", from=1-3, to=2-2]
    \end{tikzcd}\]
    where $\{\id_X,0\}$ is the unique morphism into $X \times Y$ making $\id_X$ and $0_{X,Y}$ factor through the projections $\pi_X$ and $\pi_Y$, respectively. Analogous considerations hold for $\{0_{Y,X}, \id_Y\}$. 
    If $f:X \to A$ and $g:Y \to A$ cooperate, the morphism $\psi:X \times Y \to A$ is unique and takes the name of the \emph{cooperator} of $f$ and $g$ in $A$.
\end{dfn}

In what follows we are interested in a specialization of the above definition to sub-objects.
In detail, recall that for all objects $A$ in a category $\C$ it is possible to define a partial order on the (class of) monomorphisms $\mu:X \rightarrowtail A$, $\nu:Y \rightarrowtail A$ into $A$ by setting $\mu \leq \nu$ if there exists a morphism $\varphi:X \to Y$ such that $\mu = \nu\varphi$.
A sub-object of $A$ is an equivalence class of monomorphisms $\mu,\nu$ into $A$ such that $\mu \leq \nu$ and $\nu \leq \mu$.

In this context, if $X$ and $Y$ are sub-objects of an object $A$ in $\C$ then we will say that $X$ and $Y$ cooperate if their respective inclusions $X \rightarrowtail A$, $Y \rightarrowtail A$ cooperate in the sense of the above definition.

\begin{dfn}\label[dfn]{def:centralizer}
    Let $\C$ be a pointed category with finite products, let $A$ be an object of $\C$ and consider a sub-object $X$ of $A$.
    The largest sub-object of $A$ 
     cooperating with $X$, if it exists, is called the \emph{centraliser} of $X$ in $A$ and is denoted by $Z_A(X)$.
\end{dfn}

\begin{ex}
    If $G$ is a group then a sub-object of $G$ is simply identified with the structure of a subgroup of $G$.
    As a consequence, two subgroups $X,Y \leq G$ cooperate in $G$ if and only if $xy = yx$ holds, for all $x \in X$ and $y \in Y$.
    In particular, the centraliser of $X$ in $G$ coincides with its usual group-theoretic centraliser.
    Analogously, if $A$ is a skew brace then a sub-object $X \rightarrowtail A$ of $A$ is (the equivalence class of the inclusion of) a sub-skew brace of $A$. 
\end{ex}

The following result is of fundamental importance in our argument and justifies the use of centralisers.
For more details we refer the reader to \cite[Corollary 2.6]{CiMan12}.

\begin{prop}\label[prop]{prop:condition_actionacc}
    Let $A$ be an object of a semi-abelian category $\C$ and let $X$ be a normal sub-object of $A$.
    If $\C$ is action accessible then $X$ admits a centraliser in $\C$ and the latter is normal.
\end{prop}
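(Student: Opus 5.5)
The statement is \cite[Corollary 2.6]{CiMan12}; I would reconstruct it from the action-accessibility machinery of \cite{BoJa09}. Let $\mu: X \rightarrowtail A$ be a normal monomorphism in the semi-abelian category $\C$. The plan is to realise the centraliser $Z_A(X)$ as the kernel of a morphism. That gives existence and normality at once, because kernels are normal subobjects.

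\emph{Step 1: a split extension with kernel $X$.} Since $X$ is normal in $A$, conjugation gives a split extension of $A$ by $X$. Concretely, I would take the kernel pair $R \rightrightarrows A$ of the cokernel $q: A \to A/X$. The first projection $p_1: R \to A$ is split by the diagonal $A \to R$, and its kernel is $X$ (embedded through the second projection). Call this object $\Sigma_A$ of $\Split(X)$. By action accessibility there is a morphism $\Sigma_A \to \Sigma$ into a faithful object
\[
\Sigma = (E, p, s, \nu), \qquad \nu: X \to E, \quad p: E \to B,
\]
and its component on the base is some $\varphi: A \to B$.

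\emph{Step 2: identify $\ker\varphi$ with $Z_A(X)$.} I would show two inclusions.

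First, $\ker\varphi$ cooperates with $X$. Restricting $\Sigma_A$ along $\ker\varphi \rightarrowtail A$ gives a split extension. Its composite morphism to $\Sigma$ factors through the trivial extension $X \times \ker\varphi$, since over the zero map the pullback of $\Sigma$ is trivial. Faithfulness of $\Sigma$ then forces the restricted extension to agree with the trivial one. Transporting this along the comparison produces the cooperator $X \times \ker\varphi \to A$.

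Conversely, let $Y \rightarrowtail A$ cooperate with $X$. Then the restriction of $\Sigma_A$ to $Y$ is isomorphic to the product extension $X \times Y \to Y$. This extension admits two morphisms into $\Sigma$: one through $\varphi|_Y$, and one through the trivial map $0: Y \to B$, which exists because every object of $\Split(X)$ maps to the pullback of $\Sigma$ along $0$. Faithfulness forces these two morphisms to coincide, so $\varphi|_Y = 0$, that is, $Y \leq \ker\varphi$.

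Together the two inclusions show that $\ker\varphi$ is the largest subobject cooperating with $X$. Hence $Z_A(X) = \ker\varphi$ exists and is normal.

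\emph{Main obstacle.} The delicate step is the first inclusion: extracting an actual cooperator from faithfulness. It requires checking that the restricted split extension over $\ker\varphi$ is isomorphic to a product, using the split short five lemma. It also requires checking that the section provided by this isomorphism commutes with $X$ inside $R$, and hence inside $A$ after composing with $p_2$. I would argue this via the Huq-commutator characterisation: cooperation of $X$ and $Y$ is equivalent to $[X,Y]=0$, and this in turn is equivalent to triviality of the associated action. For the latter equivalence I would invoke the standard semi-abelian correspondence between split extensions and internal actions \cite{BorJanKel05}.
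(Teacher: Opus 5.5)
Your proof is correct. It is the standard argument behind the result the paper only cites, \cite[Corollary 2.6]{CiMan12}, which goes back to Bourn--Janelidze: map the conjugation extension $(R,p_1,\delta)$ into a faithful $\Sigma$, then show that the kernel of the base component $\varphi$ is the largest subobject cooperating with $X$.

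Two small corrections. In the first inclusion, the factorisation $R\times_A\ker\varphi\to X\times\ker\varphi$ is a morphism of $\Split(X)$ over $\id_{\ker\varphi}$, and it is the split short five lemma, not faithfulness, that makes it an isomorphism; composing its inverse with $p_2$ is already the cooperator, so the detour through Huq commutators and internal actions is unnecessary. In the second inclusion, the morphism over $0$ is simply $\nu\pi_X\colon X\times Y\to E$, which exists because the product extension is the pullback of $\Sigma$ along $0$; it is not true that every object of $\Split(X)$ maps to that pullback.
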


Let us turn our attention back to skew braces.
As shown in \cite{BouFacPom23}, a sub-skew brace $I$ of a skew brace $B$ is normal if and only if $I$ is an ideal of $B$.
It is now important to characterise the cooperating sub-objects in~$\Skb$.
To this end, for a given skew brace $B$ and one of its sub-skew braces~$I$, we denote by $C^+_B(I)$ and $C^{\circ}_B(I)$ the centraliser of $I$ in $B$ in relation to the additive $+$ and multiplicative $\circ$ operations, respectively.

\begin{lemma}\label[lemma]{lemma:cooperating_subskb}
    Let $B$ be a skew brace and let $I$ be an ideal of $B$.
    The \emph{centraliser of $I$ in $B$} is the largest sub-skew of $B$ contained in the set
    \begin{align}\label{eq:max_centralizer}
        C(B,I):= C^+_B(I) \cap C^\circ_B(I) \cap \ker\lambda^{I} \,,
    \end{align}  
    if the former exists.     
    \begin{proof}
        Let $J$ be a
        sub-skew brace of $B$ and assume it cooperates with $I$.
        To prove the assertion it is sufficient to verify that $J \subseteq C(B,I)$.
        To this aim, first note that $J \subseteq C^+_B(I)\cap C^\circ_B(I)$ since $(J,+)$ and $(J,\circ)$ cooperate with $(I,\plus)$ and $(J,\circ)$, respectively.
        Consider now the cooperator $\psi:I \times J \to B$ of $I$ and $J$ as in \cref{def:cooperate} and let $x \in I$ and $y \in J$. Then we have
        \begin{align*}
            y \circ x 
            = \psi\left( (0,y) \circ (x,0) \right)
            = 
            \psi(x,y)
            = \psi\left( (0,y) \plus (x,0) \right)
            = y \plus x \,,
        \end{align*}
        from which follows that $y \in \ker\lambda^I$.
        Therefore $J \subseteq C(B,I)$, thus concluding the proof.
    \end{proof}
\end{lemma}

Note that if $I = B$, then $C(B,B)$ coincides with the \emph{annihilator} $\operatorname{Ann}(B)$ of $B$ which is an ideal of $B$, see \cite[Definition~7]{CaCoSt19}.
Despite this fact, it is worth remarking that the set $C(B,I)$ may generally fail to be a sub-skew brace, as the following example shows.

\begin{ex}
    Consider the skew brace $(B,\plus,\circ)$ of 
    order $12$ as classified in \cite[Lemma 7.2]{AcBon22} 
    and obtained by setting $q = 3$.
    Explicitly, $(B,\plus) = \Z_3 \times \Z_4$ while the multiplication is defined by setting
    \begin{align*}
        (n,\,m) \circ (x,\,y) = \left(n \plus (-1)^{\frac{m(m-1)}{2}}x,\, m \plus (-1)^m y \right)
    \end{align*}
    for all $n,x \in \Z_3$ and $m,y \in \Z_4$.
    The annihilator $I = \Ann(B)$ of $B$ is an ideal of order $3$ and is explicitly given by $I = \{(n,0) \mid n \in \Z_3 \}$.
    It is straightforward to verify that $C(B,I)$ is the set of all pairs $(n,m)\in B$ that satisfy the identity 
    $$
    x\left( 1 - (-1)^{\frac{m(m-1)}{2}} \right) = 0,
    $$ 
    for all $x \in \Z_3$. 
    \begin{align*}
        C(B,I) = \{ (0,0), (0,1), (1,0), (1,1), (2,0), (2,1) \}
    \end{align*}
    which is not a sub-skew brace of $B$ since it is not even additively closed.
    Indeed, for example, we have that $(2,1) \plus (2,1) = (1,2) \notin C(B,I)$.
    Despite this, it is not difficult to verify that $I$ coincides with its own centraliser, i.e. $I = Z_B(I)$.
\end{ex}

We are now ready to prove the main result of this paper.
In particular, in what follows we make use of the classical \emph{semidirect product} of braces in terms of \cite[p.~113]{CedFerOkn14}.
In detail, if $B,H$ are braces and  $\sigma:(B,\circ) \to \Aut(H,\plus,\circ)$ is a group homomorphism, then there exists a brace structure on $B \times H$, denoted by $B \rtimes_\sigma H$, whose operations are defined by setting
\begin{align*}
    (a,u) \plus (b,v) = (a\plus b, u\plus v) 
        \qquad\text{and}\qquad
    (a,u) \circ (b,v) = (a\circ \sigma_u(b), u\circ v) \,,
\end{align*}
for all $(a,u),(b,v)\in B\times H$.

\begin{theor}\label[theor]{theor:skb_not_actacc}
    The category $\Skb$ is not action accessible.
    \begin{proof}
        First note that thanks to \cref{lemma:sub_action} we can reduce to prove that the subvariety of all braces is not action accessible.
        To this aim, it is sufficient to apply \cref{prop:condition_actionacc} and construct a brace admitting an ideal whose centraliser exists as in \cref{def:centralizer} but is not an ideal itself.

        The candidate is the brace $B$ of order $24$ and type $625$ as found in the \texttt{GAP YangBaxter} package \cite{GAP4,YangBaxter} and built as a semidirect product as follows.
        Let $(Q,\plus,\circ)$ be the unique skew brace of order $8$ whose additive group is $\Z_2 \times \Z_4$ and with trivial socle, as classified in \cite{Bach15}.
        Specifically, $(Q,\circ) \cong D_8$ and its operation is defined by setting 
        \begin{align*}
            \begin{pmatrix}
                a \\
                x \plus 2y
            \end{pmatrix} \circ
            \begin{pmatrix}
                b \\
                u \plus 2v
            \end{pmatrix} =
            \begin{pmatrix}
                a \plus b \plus u(a \plus x \plus y \plus ax) \\
                x \plus 2y \plus 2xb \plus 2(a \plus xy)u \plus u \plus 2v
            \end{pmatrix}
        \end{align*}
        for all $a,x,y$ and $b,u,v \in \{0,1\}$.
        As a skew brace, $Q$ is generated by the elements
        $s := \mybinom{1}{0}$ 
        and
        $t := \mybinom{0}{3}$, 
        both multiplicative involutions such that $s \circ t$ has multiplicative order equal to $4$.\\
        View $J := \Z_3$ as a trivial brace and consider the group homomorphism 
        \[
            \sigma:(Q,\circ) \to \Aut(J), \, q \mapsto 
            \sigma(q):= \sigma_q
        \]
        defined by setting $\sigma_s(x) = \sigma_t(x) = -x$, for all $x\in J$, i.e., by mapping $s$ and $t$ to the unique non-trivial automorphism of $J$. 
        Notice that \begin{align*}
        \left(\ker\sigma,\circ\right) =
            \langle s \circ t \rangle_\circ =
            \Big\langle\mybinom{0}{1}\Big\rangle_\circ 
            =\Big\{\mybinom{0}{0},\,\mybinom{0}{1},\, \mybinom{1}{2},\, \mybinom{1}{1}\Big\}\,.
        \end{align*}
        Besides, the 
        semidirect product $B := J \rtimes_\sigma Q$ of $J$ and $Q$ via $\sigma$
        is a brace of order $24$ such that $(B,\plus) \cong \Z_3 \times \Z_8$ and $(B,\circ) = \Z_3 \rtimes_{\sigma} D_8 \cong D_{24}$.
        In particular, it is the unique such skew brace with socle of order $3$. 
        Set $I := \Soc(B)$ and note that
        \begin{align*}
            I = \Soc(J)\times \Soc(Q) 
            = J \times \langle 0 \rangle \,,
        \end{align*}
        which is isomorphic to $J$ as a brace. 
        Moreover, it is not difficult to verify that 
        \begin{align*}
        \ker\lambda^I = J \times \ker\sigma = C^\circ_B(I).
        \end{align*}
        Consequently, $C(B,I) = J \times \left\langle
        \mybinom{0}{1}\right\rangle_\circ$ which has cardinality $12$.
        Let us now observe that $C(B,I)$ is not a sub-brace of $B$ since
        $$
        \Big(0,\mybinom{0}{1} \Big)
        \plus
        \Big(0,\mybinom{0}{1} \Big) =
        \Big(0,\mybinom{0}{2} \Big)
        \notin C(B,I) \,.
        $$ 
        Moreover, 
        it is straightforward to verify that 
        $$
        J \times \left\langle (s \circ t)^2 \right\rangle = 
        J\times\Big\{
        \mybinom{0}{0},\, 
        \mybinom{1}{2}\Big\}
        $$ 
        is the unique sub-brace of order $6$ contained in $C(B,I)$ and thus coincides with the centraliser $Z_B(I)$ of $I$ in $B$.
        Indeed, if $C'$ is any sub-brace in $C(B,I)$ of order $6$ then it must necessarily contain $J \times \left\{
        \begin{psmallmatrix}
            0 \\ 0
        \end{psmallmatrix}
        \right\}$ from which the claim follows. 
        Finally, to
        show that $Z_B(I)$ is not an ideal of $B$ it is enough to verify that $\left\langle (s \circ t)^2 \right\rangle$ is not an ideal of $(Q,\circ)$.
        To this aim, if we set $p = \mybinom{0}{1} \in Q$ then we have that
        \begin{align*}
            \lambda_p\left( (s \circ t)^2 \right) =
            \lambda_{p}\left( \mybinom{1}{2}\right) = 
            \mybinom{1}{0}
            \notin
            \left\langle (s \circ t)^2 \right\rangle \,,
        \end{align*}
        thus terminating the proof.
    \end{proof}
\end{theor}

\begin{rem} \,
\begin{itemize}
    \item[(1)] 
    Simple \texttt{GAP} calculations point out that an ideal $I$ of a skew brace $B$ of order up to $23$ always admits a normal centraliser, even though it can be properly contained in the set $C(B,I)$ in \eqref{eq:max_centralizer}.

    \item[(2)] 
    A result analogous to \cref{theor:skb_not_actacc} can be obtained with the skew brace $B$ of order $24$ and type $435$ in the \texttt{GAP YangBaxter} package.
    More specifically, while the additive group of $B$ is non-abelian and isomorphic to the direct product $\Z_2 \times \text{Dic}_{12}$, where $\text{Dic}_{12}$ is the dicyclic group of order $12$, the multiplicative group of $B$ is isomorphic to $\Z_2 \times \Z_2 \times \Sym_3$.
    In particular, $B$ has an ideal of order $3$ that admits a non-normal centraliser.
    \end{itemize}
\end{rem}

\begin{rem}
    Normal cooperating sub-skew braces were investigated by the authors of \cite{BouFacPom23}, who showed that an ideal $I$ in a skew brace $B$ always admits a largest ideal $J$ cooperating with $I$ in $B$.
    Our example shows that, in general, $J$ may fail to be the centraliser of $I$ in $B$ as in \cref{def:centralizer}.
    For instance, if $B = J \rtimes_{\sigma} Q$ is the brace of order $24$ contained in \cref{theor:skb_not_actacc} and if $I = \Soc(B) = J \times \langle 0 \rangle$, then $I$ is the largest ideal of $B$ centralizing $I$ itself, even though $Z_B(I)$ is properly larger.
\end{rem}

\medskip
We close this section by showing that the Huq centraliser of an ideal in a two-sided skew brace coincides with its centraliser in the sense of \cite{BouFacPom23}.

\begin{prop} 
Let $(B,+,\circ)$ be a two-sided skew brace and  
$I$ an ideal of $B$.
Then the set $C(B,I)$ defined in \eqref{eq:max_centralizer} is an ideal of $B$.
\end{prop}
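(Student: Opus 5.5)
The plan is to verify directly that $C:=C(B,I)$ is an additive subgroup, closed under $\circ$, $\lambda$-invariant, additively and multiplicatively normal, and then $\sigma$-invariant; by \cref{rem:sigma-ideal} the last three properties together give that $C$ is an ideal. Throughout I will use the reformulation that $y\in C$ if and only if, for all $x\in I$,
\begin{align*}
y+x \;=\; x+y \;=\; y\circ x \;=\; x\circ y .
\end{align*}
Here $y\circ x=y+x$ is exactly the condition $y\in\ker\lambda^I$, and the equality $x\circ y=x+y$ then follows from $y\in C^\circ_B(I)$.

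First I will show that $(C,+)$ is a subgroup, which is where two-sidedness is used. For $y_1,y_2\in C$ and $x\in I$, \eqref{eq:twosided} gives
\begin{align*}
(y_1+y_2)\circ x = y_1\circ x - x + y_2\circ x = y_1+y_2+x .
\end{align*}
Similarly, \eqref{eq:skewbrace-identity} together with the additive commutation of $x$ with the $y_i$ gives $x\circ(y_1+y_2)=y_1+y_2+x$. For negatives, I expand $0=(y+(-y))\circ x$ with \eqref{eq:twosided} to obtain $(-y)\circ x=-y+x$, and symmetrically $x\circ(-y)=x-y$. Since $-y$ commutes additively with $x$, this gives $-y\in C$.

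Next I will check closure under $\circ$. Since $\lambda$ is a homomorphism by \cref{prop:lambda}, the set $\ker\lambda^I$ is a multiplicative subgroup, and $C^\circ_B(I)$ is a subgroup as well. For additive commutation I compute
\begin{align*}
(y_1\circ y_2)+x=y_1\circ(y_2\circ x)=y_1\circ(x+y_2)=y_1\circ x-y_1+y_1\circ y_2=x+y_1\circ y_2 .
\end{align*}
For inverses I use \eqref{eq:lambda_sum}: $y^-+x=y^-\circ\lambda_{y^-}^{-1}(x)=y^-\circ\lambda_y(x)=y^-\circ x$. On the other side I use \eqref{eq:lambda_sum2}, $x+y^-=\sigma_{y^-}^{-1}(x)\circ y^-$, and aim to show that $\sigma_{y^-}$ fixes $x$. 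The reason is that $\sigma_y(x)=x\circ y-y=x$ and $\sigma$ is an anti-homomorphism.

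Then I will prove normality and invariance. Additive normality is straightforward. For $b\in B$ and $y\in C$, normality of $(I,+)$ gives $-b+x+b\in I$, so $b+y-b$ commutes additively with $x$. Multiplicative normality of $C^\circ_B(I)$ and of $\ker\lambda^I$ follows in the same way from normality of $(I,\circ)$ and from $\ker\lambda^I$ being the kernel of a homomorphism. To obtain the remaining additive condition for $b\circ y\circ b^-$, I will deduce it from $\lambda$-invariance combined with additive normality, writing $b\circ y\circ b^-=b+\lambda_b(y\circ b^-)$. For $\lambda$-invariance, $\lambda_b(y)+x=\lambda_b(y+\lambda_b^{-1}(x))=x+\lambda_b(y)$ because $\lambda_b^{-1}(x)\in I$. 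The multiplicative conditions for $\lambda_b(y)=-b+b\circ y$ should follow from the two-sided identities by the same kind of expansion. Finally, $\sigma$-invariance follows from the formula in \cref{rem:sigma-ideal}, or alternatively by a symmetric computation with $\sigma_b$.

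The step I expect to be the main obstacle is this last block: showing that the multiplicative conditions ($\circ$-commutation and membership in $\ker\lambda^I$) are preserved by $\lambda_b$ and by multiplicative conjugation. Here the additive and multiplicative structures interact, and one must alternate carefully between \eqref{eq:lambda_sum} and \eqref{eq:lambda_sum2}. If that bookkeeping becomes too heavy, I would first try to establish that $C$ is a left ideal, and then use additive and multiplicative normality to upgrade it to an ideal via \cref{rem:sigma-ideal}.
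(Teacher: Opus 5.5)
Your first two steps are correct, up to one slip: you must expand $x = 0\circ x = (y+(-y))\circ x$, not $0$. Your $\sigma$-argument for $y^-\in C^+_B(I)$ even makes explicit a point that the paper's computation of $a\circ b^- + x$ uses tacitly. The gap is the third block, where the real work lies; as written it is not yet a proof.

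\textbf{Multiplicative conditions under $\lambda$-invariance.} You only check additive commutation and leave the two multiplicative conditions as ``should follow''. The missing device, which is the paper's, is to write $\lambda_b(y) = b\circ(b^-+y)$.
\begin{itemize}
\item Right distributivity gives $(b^-+y)\circ x = b^-\circ x + y$, hence $\lambda_b(y)\circ x = b\circ(b^-\circ x + y) = x+\lambda_b(y)$.
\item Put $x' := b^-\circ x\circ b\in I$. Left distributivity gives $x\circ\lambda_b(y) = b\circ x'\circ(b^-+y) = b\circ(x'\circ b^- + y) = x+\lambda_b(y)$.
\end{itemize}

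\textbf{Additive normality.} You only show that $b+y-b$ commutes additively with $I$. You never show that $b+y-b\in C^\circ_B(I)\cap\ker\lambda^I$. This does not follow from normality of $(I,+)$; it is exactly where two-sidedness enters. Expanding gives $(b+y-b)\circ x = b\circ x + y - b\circ x + x$ and $x\circ(b+y-b) = x\circ b + y - x\circ b + x$. You then need $b\circ x = b+\lambda_b(x)$ and $x\circ b = \sigma_b(x)+b$, together with $\lambda_b(x),\sigma_b(x)\in I$ (\cref{rem:sigma-ideal}).

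\textbf{Additive condition for $b\circ y\circ b^-$.} Your route does not close. In $b + \lambda_b(y\circ b^-)$ the element $y\circ b^-$ is not in $C$, so $\lambda$-invariance and additive normality of $C$ say nothing about it. You need the right action, in one of two ways.
\begin{itemize}
\item Write $y\circ b^- = \sigma_{b^-}(y)+b^-$, so that $b\circ y\circ b^- = b+\lambda_b(\sigma_{b^-}(y))-b$. Then use that $C^+_B(I)$ is stable under every $\lambda_b$, every $\sigma_c$ and additive conjugation; this is immediate because $I$ is.
\item More cleanly, push your own observation $\sigma_y|_I = \id_I$ further. One checks that $C = C^\circ_B(I)\cap\ker\lambda^I\cap\ker\sigma^I$, where $\ker\sigma^I$ is the kernel of the anti-homomorphism $(B,\circ)\to\Aut(I,+)$, $c\mapsto\sigma_c|_I$. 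This map is well defined because $I$ is $\sigma$-invariant. So $C$ is an intersection of three normal subgroups of $(B,\circ)$. This is in effect what the paper uses when it replaces $\sigma_{h^-\circ a^-\circ h}(x)$ by $x$.
\end{itemize}

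\textbf{On $\sigma$-invariance.} Deriving $\sigma$-invariance of $C$ at the end from the formula in \cref{rem:sigma-ideal} presupposes multiplicative normality, so it cannot be used in the step above. That final step is also superfluous: a $\lambda$-invariant additive subgroup that is normal in both $(B,+)$ and $(B,\circ)$ is an ideal by definition.
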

\begin{proof}
    Let $a,b\in C(B,I)$ and $x \in I$.
    Naturally, $a-b \in C^+_{B}(I)$.
    Moreover, we have that
    \begin{align*}
        (a-b) \circ x &=
        a \circ x -x + (-b) \circ x 
        =
        a \circ x - x + x - b \circ x + x 
        =
        a-b+x \,,
    \end{align*}
    as well as 
    $$
    x \circ (a-b) = x \circ a - x + x \circ (-b) = a + x - x \circ b +x = a-b+x,
    $$ 
    thus showing that $a-b \in C^\circ_{B}(I)$.
    From the same calculations we deduce that 
    $$
    \lambda_{a-b}(x) = b-a + (a-b) \circ x = x,
    $$ 
    hence that $a-b\in\ker\lambda^I$.
    This shows that $C(B,I)$ is an additive subgroup of~$B$.

    Now, to prove that $C(B,I)$ is a multiplicative subgroup of $B$ first note that
    \begin{align*}
        a\circ b^- + x
        &= a + \lambda^{}_{a}(b^-) + x
        = a + \lambda^{}_{a}\big(b^- + \lambda^{-1}_{a}(x)\big)\\
        &= a + \lambda^{}_{a}\big(\lambda^{-1}_{a}(x) + b^-\big)
        = a + x + \lambda^{}_{a}(b^-)\\
        & = x + a + \lambda^{}_{a}(b^-) 
        = x + a\circ b^- \,,
    \end{align*}
    where we 
    used the fact that $\lambda_a^{-1}(x) \in I$.
    Besides, since $\lambda_{a \circ b}(x) = x = \lambda_{a^{-}}(x)$ and from the fact that $C^\circ_B(I)$ is already closed under $\circ$, we thus deduce the same assertion for $C(B,I)$.
    Therefore, $C(B,I)$ is a 
    sub-skew brace of $B$.
    Let us now prove that 
    $C(B,I)$ is $\lambda$-invariant.
    To this aim, let $h \in B$, $a \in C(B,I)$ and first note that
    \begin{align*}
        \lambda_h(a) + x =
        \lambda_h\left( a + \lambda_a^{-1}(x) \right) =
        \lambda_h\left( \lambda_h^{-1}(x) + a \right) =
        x + \lambda_h(a) \,,
    \end{align*}
    holds, for all $x \in I$, thus showing that $\lambda_h(a)\in C^+_B(I)$. 
    Furthermore, we have that
    \begin{align*}
        \lambda_h(a) \circ x =
        h \circ \left( h^{-1} + a \right) \circ x =
        h \circ \left(h^{-1} \circ x - x + a \circ x \right) =
        x + \lambda_h(a) \,,
    \end{align*}
    proving that $\lambda_h(a)\in\ker\lambda^I$.
    Finally, 
    \begin{align*}
        x \circ \lambda_h(a) &=
        x \circ h \circ \left(h^{-} + a\right) =
        h \circ \left(h^{-} \circ x \circ h\right) \circ \left(h^{-} + a \right) \\
        &=
        h \circ \left(y \circ h^{-} - y + y \circ a\right) \,,
    \end{align*}
    where we 
    set $y:= \left(h^{-} \circ x \circ h\right)\in I$. It follows that
    \begin{align*}
        x \circ \lambda_h(a) =
        h \circ \left(y \circ h^{-} + a \right) =
        x - h + h \circ a =
        x + \lambda_h(a) \,,
    \end{align*}
    showing that $\lambda_h(a)\in C^\circ_B(I)$.
    Therefore, $C(B,I)$ is $\lambda$-invariant.

    It remains to verify  that~$C(B,I)$ 
    is a normal subgroup of $(B,+)$ and $(B,\circ)$, respectively.
    To this end, let~$h\in B$, $a\in C(B,I)$ and first note that obviously~$-h\plus a + h\in C^+_B(I)$. 
    It follows that
    \begin{align*}
        \left(-h +a +h\right) \circ x 
        &=
        (-h) \circ x -x + a \circ x -x \plus h \circ x 
        = x - h \circ x \plus a \plus h \circ x \\[0.1cm]
        &= x - \lambda_h(x) - h \plus a \plus h \plus \lambda_h(x) 
        = -h \plus a \plus h \plus x \,,
    \end{align*}
    namely, $-h\plus a \plus h \in \ker\lambda^I$.
    To show that $-h\plus a \plus h \in C^\circ_B(I)$ it is sufficient to note that
    \begin{align*}
        x \circ (-h \plus a \plus h) &=
        x \circ (-h) -x \plus x \circ a -x \plus x \circ h 
        =
        x - x \circ h \plus a \plus x \circ h \\[0.1cm]
        &=
        x - h - \sigma_h(x) \plus a \plus \sigma_h(x) \plus h 
        =
        x - h \plus a \plus h \,,
    \end{align*}
    since, by \cref{rem:sigma-ideal}, $\sigma_h(x)\in I$.
    Let us 
    lastly check that $h^{-1} \circ a \circ h \in C(B,I)$.
    Naturally, $h^{-1} \circ a \circ h$ is already contained in $C^\circ_B(I)$. Furthermore, notice that 
    \begin{align*}
        h^{-} \circ a \circ h \plus x 
        &= h^{-} \circ a \circ h \circ \lambda_h^{-1}\lambda_a^{-1}\lambda_h^{}(x)
        = h^{-} \circ a \circ h \circ x \,,
    \end{align*}
    while,
    \begin{align*}
    x \plus h^{-} \circ a \circ h 
    &= 
    \sigma^{}_{h^-\circ a^-\circ h}(x)\circ h^{-} \circ a \circ h  
    = h^{-} \circ a \circ h\circ \lambda^{-1}_{h^{-} \circ a \circ h}(x) \\
    &= h^{-} \circ a \circ h + x
    \end{align*}
    where in the last equality we use \cref{lemma:sigma+circ}. Hence,    
    $h^{-} \circ a \circ h\in
    C^+_B(I)$. 
    Finally, the same calculations ensure that $h^{-}\circ a \circ h\in \ker\lambda^I$, thereby proving the assertion.
    \end{proof}

\smallskip

\begin{question}
    Is the variety of two-sided skew braces action accessible?
    More generally, is the subvariety of two-sided skew braces the largest action accessible subvariety of $\Skb$?
\end{question}

\section{Action representability of post-Lie algebras}

In this section, we show that the category $\PostL_k$ of post-Lie algebras (over a fixed field $k$) is not action representable.
In particular, we prove the stronger statement according to which the subvariety $\PreL_k$ of pre-Lie algebras is not action accessible. 
For more details on these structures, we refer the reader to recent papers \cite{Bai20,BuDeMi24} and references therein.

\medskip

Let us begin by first recalling a useful characterisation of action accessibility for an \emph{operadic variety}, namely, 
a variety of non-associative algebras over a field $k$ determined by a set of multilinear identities (see \cite{ReVanCo23}).
Specifically, as proved in \cite{GarVan19} and pointed out in \cite{BroGarManVan25}, we have the following rather useful result.

\begin{prop}\label[prop]{prop:action_acc}
    Let $\V$ be an operadic variety of non-associative algebras over a field $k$. Then the following are equivalent:
    \begin{enumerate}
        \item[{\rm(1)}] $\V$ is action accessible;
        \item[{\rm(2)}] There exist scalars $\lambda_1, \ldots, \lambda_8$, $\mu_1, \ldots, \mu_8 \in k$ such that the following
        \begin{align}
            x(yz) = \lambda_1(xy)z &+ \lambda_2(yx)z + \lambda_3z(xy)+ \lambda_4z(yx) + \lambda_5(xz)y \notag\\
            &+ \lambda_6(zx)y + \lambda_7y(xz) + \lambda_8y(zx) \,,\label{eq:action_acc1} \\
            (yz)x = \mu_1(xy)z &+ \mu_2(yx)z + \mu_3z(xy)+ \mu_4z(yx) \notag\\
            &+ \mu_5(xz)y + \mu_6(zx)y + \mu_7y(xz)+ \mu_8y(zx) \label{eq:action_acc2}
        \end{align}
        are identities in $\V$.
    \end{enumerate}
\end{prop}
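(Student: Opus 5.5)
The plan is to prove the two implications separately. For (2)$\Rightarrow$(1) I would rely on known structure theory. For (1)$\Rightarrow$(2) I would use the centraliser criterion of \cref{prop:condition_actionacc}, applied to a suitably chosen quotient of a free algebra of $\V$. Throughout I use that an operadic variety is semi-abelian, and that its free algebras are graded by multidegree in the generators, because the defining identities are multilinear.

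For (2)$\Rightarrow$(1), I would observe that identities \eqref{eq:action_acc1} and \eqref{eq:action_acc2} are exactly Orzech's second axiom for a category of interest with a single bilinear operation. Hence $\V$ is a category of interest. By Montoli's theorem, every category of interest is action accessible, which gives (1). I would not reprove Montoli's result.

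For (1)$\Rightarrow$(2), the first step is to characterise Huq cooperation in $\V$. Two subalgebras $X,Y\le A$ cooperate if and only if $XY=YX=0$. Indeed, a cooperator is forced to be $\psi(a,b)=a+b$ by linearity. This map is multiplicative on $X\times Y$ precisely when all mixed products vanish.

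Next I would build the test algebra. Let $F$ be the free $\V$-algebra on $x,y,z$. Let $R$ be the ideal of $F$ generated by
\[
xy,\ yx,\ xz,\ zx,\ y^2,\ z^2,\ x^2,
\]
together with all monomials of degree at least $4$. Set $A:=F/R$, and let $X$ be the ideal of $A$ generated by $x$. The key computation is that the multilinear component of $R$ of multidegree $(1,1,1)$ in $(x,y,z)$ is spanned by the eight monomials appearing on the right-hand sides of \eqref{eq:action_acc1} and \eqref{eq:action_acc2}. The reason is that the generators $x^2,y^2,z^2$ are not multilinear, so they contribute nothing in that multidegree. The only multilinear degree-$3$ consequences of $xy,yx,xz,zx$ are their products with the remaining variable on either side.

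The step I expect to be the main obstacle is checking that the subalgebras $\langle y\rangle$ and $\langle z\rangle$ each cooperate with $X$. Since $\langle y\rangle$ is spanned in low degree by $y$ (as $y^2=0$ and high degree vanishes), I would verify that $X$ is spanned by $x$ and degree-$\ge 2$ terms containing $x$, and then that all products of these with $y$ lie in $R$ by the truncation. Granting this, suppose $\V$ is action accessible. Then by \cref{prop:condition_actionacc} the centraliser $Z_A(X)$ exists. Being the largest subalgebra cooperating with $X$, it contains both $y$ and $z$, hence it contains $yz$. So $x(yz)=0=(yz)x$ in $A$, that is, $x(yz),(yz)x\in R$. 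Reading these memberships in multidegree $(1,1,1)$ expresses $x(yz)$ and $(yz)x$ as linear combinations of the eight monomials modulo the identities of $\V$. These are exactly \eqref{eq:action_acc1} and \eqref{eq:action_acc2}, and they hold as identities of $\V$ because $F$ is free.
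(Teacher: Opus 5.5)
Your proof is correct. It necessarily takes a different route from the paper, because the paper gives no proof of \cref{prop:action_acc}: it imports the equivalence from \cite{GarVan19} and \cite{BroGarManVan25} and uses it as a black box.

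For (2)$\Rightarrow$(1) you use the standard mechanism, namely Orzech categories of interest together with Montoli's theorem. One detail is worth stating precisely. Orzech's definition requires $\Omega_2'$ to be closed under opposite operations, so you should take $\Omega_2'=\{\cdot,\cdot^{\mathrm{op}}\}$. With that choice, Axiom~2 for the four ordered pairs reduces exactly to \eqref{eq:action_acc1} and \eqref{eq:action_acc2}, and the remaining axioms hold automatically for algebras over a field.

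Your (1)$\Rightarrow$(2) argument checks out in every step:
\begin{itemize}
\item Huq cooperation of subalgebras is exactly $XY=YX=0$, because the cooperator is forced to be $(a,b)\mapsto a+b$.
\item The generators $x^2,y^2,z^2$ together with the truncation in degree $\ge 4$ are precisely what is needed. They make $\langle y\rangle$ and $\langle z\rangle$ one-dimensional, and they kill every product of these with the ideal $X$ generated by $x$.
\item All generators of $R$ are multihomogeneous, so $R_{(1,1,1)}$ is spanned by the eight listed monomials. Hence $x(yz),(yz)x\in R$ gives the identities in the free algebra, and therefore in $\V$.
\end{itemize}

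What your route buys is the following:
\begin{itemize}
\item It stays entirely within the Huq-centraliser technology the paper already uses in the skew brace section.
\item It uses only the existence half of \cref{prop:condition_actionacc}, never the normality of $Z_A(X)$.
\item Combined with (2)$\Rightarrow$(1), it shows that for operadic varieties action accessibility is equivalent to the mere existence of Huq centralisers of ideals. In fact one ideal suffices: the one generated by $x$ in your truncated free algebra on three generators.
\item It makes the section self-contained for the direction that actually drives \cref{prop:pre_non_acc}.
\end{itemize}
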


\smallskip

Let us now introduce the main definition of this section.

\begin{dfn}
    A \emph{post-Lie algebra}~\cite{Val07} (over a field~$k$) is the datum of a~$k$-vec\-tor space $A$ together with two bilinear maps $\{\cdot,\cdot\}$, $\cdot: A \times A \to A$ such that
    \begin{enumerate}
        \item[{\rm(1)}] $(A,\{\cdot,\cdot\})$ is a Lie algebra;
        \item[{\rm(2)}] The identities
            \begin{align}
                x \{y,z\} &= \{xy,z\} + \{y,xz\} \,, \label{eq:post1} \\
                \{x,y\} z &= a(x,y,z) - a(y,x,z) \,, \label{eq:post2}
            \end{align}
            hold for all $x,y,z \in A$,
            where $a(x,y,z):= x(yz) - (xy)z$, is the so-called \emph{associator} on $(A,\cdot)$.
    \end{enumerate}
    A \emph{pre-Lie algebra} is a non-associative algebra $(A,\cdot)$ such that
    \begin{align}\label{eq:preLie}
        x(yz) - (xy)z = y(xz) - (yx)z \,,
    \end{align}
    holds, for all $x,y,z \in A$, or, equivalently such that
     \begin{align*}
        a(x,y,z) = a(y,x,z) \,,
    \end{align*}
    holds, for all $x,y,z \in A$.
\end{dfn}

\begin{rem}\label[rem]{rem-PreLie-PostLie}\,
\begin{enumerate}
    \item[(1)] It is important to note that a pre-Lie algebra $(A,\cdot)$ is a particular post-Lie algebra $(A,\{\cdot,\cdot\},\cdot)$ with trivial bracket $\{\cdot,\cdot\}$.
    Hence, the variety $\PreL$ is a (proper) subvariety of $\PostL$. Despite this fact, pre-Lie algebras have attracted interest independently from post-Lie algebras, (see \cite{Bai20} for a comprehensive treatment).
\item[(2)] Any pre-Lie algebra satisfies the identity \eqref{eq:action_acc1} in \cref{prop:action_acc} (by considering $\lambda_1 = \lambda_7 = 1$, $\lambda_2 = -1$, and the other coefficient $\lambda_i = 0$).
\end{enumerate}

\end{rem}

\smallskip

\begin{exs} \,
    \begin{enumerate}
        \item[{\rm(1)}] An associative algebra is a pre-Lie algebra whose associator is identically zero, i.e., such that $a(x,y,z) = 0$, for all $x,y,z \in A$.
        \item[{\rm(2)}] A Lie algebra $(A,\{\cdot,\cdot\})$ always admits a structure of post-Lie algebra $(A,\{\cdot,\cdot\},\cdot)$ defined by setting $x \cdot y = 0$, for all $x,y \in A$.
        \item[{\rm(3)}] If $(A,\{\cdot,\cdot\})$ is a $2$-step nilpotent Lie algebra then $(A,\{\cdot,\cdot\},\{\cdot,\cdot\})$ is a post-Lie algebra.
    \end{enumerate}
\end{exs}

Let us now turn our attention to the claim of this section.

\begin{prop}\label[prop]{prop:pre_non_acc}
    Let $k$ be a field. The category 
    $\PostL_{k}$ of post-Lie algebras over $k$ is not action accessible.
    \begin{proof}
        In light of \cref{lemma:sub_action}, it is sufficient to prove the assertion for the variety $\PreL_{k}$ of pre-Lie algebras over $k$.
        Moreover, thanks to \cref{prop:action_acc}, it is sufficient to display a pre-Lie algebra in which either \eqref{eq:action_acc1} or \eqref{eq:action_acc2} does not hold.
        To this aim, consider the four dimensional pre-Lie algebra $I_4$ from \cite[Example 1.1]{Bu98}
        with basis $\{e_1,e_2,e_3,e_4\}$ and whose non-trivial products are given~by
        \begin{align*}
            e_1e_2 &= e_2 \,, \quad
            e_1e_3 = e_3 \,, \quad
            e_1e_4 = e_4\,, \\
            e_1e_1 &= 2e_1 \,,\quad
            e_2e_2 = e_3e_3 = e_4e_4 = e_1 \,.
        \end{align*}
        The term $(e_2e_2)e_3 = e_3$ does not belong to the subalgebra of $I_4$ generated by the elements
        \begin{align*}
            (e_3e_2)e_2\,, \ (e_2e_3)e_2\,, \ e_2(e_3e_2)\,, \ e_2(e_2e_3)\,,
        \end{align*}
        due to the fact that the latter terms are all equal to zero.
        Consequently, the identity \eqref{eq:action_acc2} cannot be satisfied in $A$ for any choice of $\mu_1, \ldots, \mu_8$.
        Therefore, we conclude that $\PreL_{k}$ is not action accessible.
    \end{proof}
\end{prop}

As a consequence of \cref{prop:pre_non_acc} and \cref{rem:implication},  we deduce that the category $\PostL_{k}$ of post-Lie algebras (over any field $k$) is not (weakly) action representable.
If instead we restrict our attention to Novikov algebras, then we can prove the following result.

\begin{prop}\label[prop]{prop-Nov}
    The variety of Novikov algebras {\rm(}over an arbitrary field $k${\rm)} is action accessible.
\end{prop}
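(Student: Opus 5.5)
We apply \cref{prop:action_acc}. Novikov algebras form an operadic variety, defined by the multilinear identities
\begin{align*}
    x(yz) - (xy)z = y(xz) - (yx)z \qquad\text{and}\qquad (xy)z = (xz)y ,
\end{align*}
that is, left-symmetry of the associator together with right commutativity. So it suffices to exhibit scalars $\lambda_1,\ldots,\lambda_8$ and $\mu_1,\ldots,\mu_8$ for which \eqref{eq:action_acc1} and \eqref{eq:action_acc2} hold identically. The same scalars will work over every field, since we only use coefficients $0,\pm1$.

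For \eqref{eq:action_acc1} there is nothing new to do. Every Novikov algebra is pre-Lie, so by \cref{rem-PreLie-PostLie}(2) we may take $\lambda_1=\lambda_7=1$, $\lambda_2=-1$ and all other $\lambda_i=0$. This gives $x(yz)=(xy)z-(yx)z+y(xz)$, which is just the pre-Lie identity \eqref{eq:preLie} rearranged.

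The real content is \eqref{eq:action_acc2}, where we must rewrite $(yz)x$ as a combination of the eight monomials on the right. Right commutativity with its variables renamed gives $(yz)x=(yx)z$. The monomial $(yx)z$ is the one with coefficient $\mu_2$, so we choose $\mu_2=1$ and all other $\mu_i=0$.

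Checking the left-hand orderings against the list of monomials in \eqref{eq:action_acc2} is routine. Left-symmetry is only needed for the $\lambda$'s, and right commutativity alone settles the $\mu$'s. The only point needing care is the bookkeeping of variable positions: the identity used has the form $(ab)c=(ac)b$ with $a=y$, $b=z$, $c=x$. Hence both conditions of \cref{prop:action_acc} are met, and the variety of Novikov algebras is action accessible.
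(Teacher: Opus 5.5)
Your proof is correct and follows the paper's argument exactly: \cref{rem-PreLie-PostLie}(2) handles \eqref{eq:action_acc1}, and right commutativity in the form $(yz)x=(yx)z$ handles \eqref{eq:action_acc2}. Your choice $\mu_2=1$ with all other $\mu_i=0$ is the right one; the paper's ``unique non-zero coefficient $\mu_1=0$'' is evidently a misprint for exactly this.
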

\begin{proof}
    By (2) in \cref{rem-PreLie-PostLie}, the identity \eqref{eq:action_acc1} is satisfied. Moreover, \eqref{eq:action_acc2} trivially holds (by considering as unique non-zero coefficient $\mu_1 = 0$).
    Therefore, the claim follows.
\end{proof}

Let $A$ be the pre-Lie algebra in the proof of \cref{prop:pre_non_acc}.
Let us underline that this counterexample heavily relies on the fact that $A$ is not Novikov since, for example, $(e_2e_2)e_3 \neq (e_2e_3)e_2$.
Therefore, it is natural to state the following question.

\begin{question}
    If $\V$ is an action accessible subvariety of $\PreL_k$ and $A \in \V$, is $A$ a Novikov algebra?
\end{question}

\section*{Acknowledgments}
The authors are all members of GNSAGA (INdAM) and the non-profit association ADV-AGTA. This work was supported by the Dipartimento di Matematica e Fisica ``Ennio De Giorgi" - Università del Salento.\\
A.~Albano was supported by a scholarship financed by the Ministerial Decree no.~118/2023, based on the NRRP - funded by the European Union - NextGenerationEU - Mission 4.


\bibliographystyle{elsart-num-sort}  
\bibliography{biblio}

\end{document}